% mappa esponenziale per la metrica di jacobi

\newcommand{\R}{\mathds R}

\newcommand{\Dds}{\tfrac{\mathrm D}{\mathrm ds}}

\documentclass[twoside,a4paper,11pt]{amsart}

\usepackage{amsmath}
\usepackage{times}

\usepackage{hyperref}
\usepackage{dsfont}
\usepackage{enumerate}
\usepackage{amssymb}
\usepackage{color}

\numberwithin{equation}{section}

\title[On the normal exponential map in singular conformal metrics]{On the normal exponential map in singular conformal metrics}
\author[R.\ Giamb\`o ,\ F.\ Giannoni]{Roberto Giamb\`o, Fabio Giannoni}
\address{Scuola di Scienze e Tecnologie \hfill\break\indent
Universit\`a di Camerino\hfill\break\indent Italy}
\email{roberto.giambo@unicam.it, fabio.giannoni@unicam.it}
\author[P. Piccione]{Paolo Piccione}
\address{Departamento de Matem\'atica\hfill\break\indent Instituto de
Matem\'atica e Estat\'\i stica
\hfill\break\indent Universidade de S\~ao Paulo
\hfill\break\indent Brazil}
\email{piccione@ime.usp.br}

\subjclass[2010]{30F45, 58E10}

\date{July 27th, 2014}

\begin{document}

% Theorems and such

\theoremstyle{plain}\newtheorem{teo}{Theorem}[section]
\theoremstyle{plain}\newtheorem{prop}[teo]{Proposition}
\theoremstyle{plain}\newtheorem{lem}[teo]{Lemma}
\theoremstyle{plain}\newtheorem{cor}[teo]{Corollary}
\theoremstyle{definition}\newtheorem{defin}[teo]{Definition}
\theoremstyle{remark}\newtheorem{rem}[teo]{Remark}
\theoremstyle{definition}\newtheorem{example}[teo]{Example}
\theoremstyle{remark}\newtheorem{step}{\bf Step}
\theoremstyle{plain}\newtheorem*{teon}{Theorem}
\theoremstyle{plain}\newtheorem*{conj}{Conjecture}
\theoremstyle{plain}\newtheorem*{defin*}{Definition}

%%%%%

\begin{abstract}
Brake orbits and homoclinics of autonomous dynamical systems correspond, via Maupertuis principle, to
geodesics in Riemannian manifolds endowed with a metric which is singular on the boundary (Jacobi metric).
Motivated by the classical, yet still intriguing in many aspects, problem of establishing multiplicity results for brake
orbits and homoclinics, as done in \cite{GGP1,esistenza, arma}, and by the development of a Morse theory in \cite{MT}
for  geodesics in such kind of metric, in this paper we study the related normal exponential map from a global perspective.
\end{abstract}

\maketitle
\renewcommand{\contentsline}[4]{\csname nuova#1\endcsname{#2}{#3}{#4}}
\newcommand{\nuovasection}[3]{\medskip\hbox to \hsize{\vbox{\advance\hsize by -1cm\baselineskip=12pt\parfillskip=0pt\leftskip=3.5cm\noindent\hskip -2cm #1\leaders\hbox{.}\hfil\hfil\par}$\,$#2\hfil}}
\newcommand{\nuovasubsection}[3]{\medskip\hbox to \hsize{\vbox{\advance\hsize by -1cm\baselineskip=12pt\parfillskip=0pt\leftskip=4cm\noindent\hskip -2cm #1\leaders\hbox{.}\hfil\hfil\par}$\,$#2\hfil}}

%%%%%%%%%%
%%%%%%%%%%
\section{Introduction}\label{sec:intro}
The purpose of this paper is to prove global regularity results for the distance-to-the-boundary function in Riemannian manifolds with singular metrics on the boundary. 
This kind of study is motivated by the use of the degenerate Jacobi metric (via Maupertuis' principle) for the problem of brake orbits and homoclinics in the autonomous case, as done in \cite{GGP1, esistenza,jde, arma}. This approach was suggested for the first time by Seifert in \cite{seifert}, where a famous conjecture concerning a multiplicity results for brake orbits was formulated.
The metric singularity on the boundary is of a very special type, being produced by the first order vanishing of a conformal factor
which multiplies a fixed background metric. Following the local theory developed in \cite{MT}, in this paper we will introduce a suitable notion of normal exponential map
adapted to this type of degenerate boundaries, and we will determine its regularity properties.

There exists a huge amount of literature concerning the study of brake orbits -- see e.g., \cite{liu2013,LZZ,zhang11,zhang13} -- and more generally on the study of periodic solutions of autonomous
Hamiltonian systems with prescribed energy \cite{liu2014,liu2002,Long,rab87,rab}.
We also observe here that  manifolds with singular boundary of the type investigated in the present paper 
arise naturally in the study of certain compactifications of incomplete Riemannian manifolds. They constitute an important class
of the so-called \emph{singular manifolds}, see \cite{amann13} and the references therein, where the singularity is described by the 
vanishing (or the diverging) of some conformal factor, called the \emph{singularity function}.

In order to describe the results of the present paper, let us consider a Riemannian manifold $(M,g)$
of class $C^3$,
representing the configuration space of some conservative dynamical system,
and let $V:M\to\mathds R$ be a map of class $C^2$ on $M$, which represents the potential function of the system.
Fix an energy level $E \in \mathds{R}$, $E>\inf\limits_MV$, and consider the Jacobi metric:
\begin{equation}\label{eq:Econf}
g_*=\tfrac12(E-V)g,
\end{equation}
defined in the open sublevel $V^{-1}\big(\left]-\infty,E\right[\big)$, the so called \emph{potential well}.
Note that $g_*$ is singular  on the boundary $V^{-1}(E)$.

For any $Q \in V^{-1}\big(\left]-\infty,E\right[\big)$, denote by $d_V(Q)$ the distance of $Q$ from $V^{-1}(E)$ with respect to the Jacobi metric \eqref{eq:Econf}.
In the recent work \cite{MT},  the following assumptions:
\begin{itemize}\label{eq:Jacobi}
\item $V$ is of class $C^2$ in a neighborhood of $V^{-1}\left(\left]-\infty,E\right[\right)$;
\item $E$ is a regular value for $V$;
\item the sublevel $V^{-1}\left(\left]-\infty,E\right]\right)$ is compact;
\end{itemize}
were used to prove that if the minimizer that realizes $d_V(Q)$ is unique, then $d_V$ is differentiable at $Q$,
and its gradient with respect to the Riemann metric $g$ is given by
\[\nabla^gd_V(Q)=\frac{E-V(Q)}{2d_V(Q)}\dot \gamma_Q(1),\] where $\gamma_Q$ is the minimizer (affinely parametrized in the interval $[0,1]$) joining $V^{-1}(E)$ with $Q$.
Uniqueness of the minimizer is guaranteed for all points $Q$ sufficiently close to the boundary $V^{-1}(E)$.
Moreover, in \cite[Section~4]{MT} a definition of Jacobi fields along Jacobi geodesic starting from $V^{-1}(E)$ is given and a Morse Index Theorem was proved.

Following along this path, in this paper we introduce e normal exponential map $\exp^\perp$, defined in terms of
$g_*$-geodesics $\gamma:\left]0,a\right]\to V^{-1}\big(\left]-\infty,E\right[\big)$ satisfying
$\lim\limits_{s\downarrow0}\gamma=P\in V^{-1}(E)$, see Section~\ref{sec:esp}. Such geodesic is necessarily ``orthogonal''
to $V^{-1}(E)$, in the sense that a suitable normalization of $\dot\gamma(s)$, when $s$ goes to $0$, admits as limit as a vector $v\in T_P\big(V^{-1}(E)\big)$ which is $g$-orthogonal to $V^{-1}(E)$ at $P$.
We prove the regularity of $\exp^\perp$, and we establish the equivalence between conjugate points to $V^{-1}(E)$ and critical values of the exponential map (Proposition~\ref{prop:expC1} and Theorem~\ref{thm:equiv}).

In section \ref{sec:reg} we apply the above result to prove that if the minimizer between $V^{-1}(E)$ and $Q_0$ is unique and if $Q_0$ is not conjugate to $V^{-1}(E)$, then $d_V(Q)$ is of class $C^2$ in a neighborhood of $Q_0$ (cf.\ Theorem \ref{teo:C2reg}). This extends
the result proved in \cite{GGP1}, where the $C^2$--regularity is proved only for points $Q_0$ sufficiently close to $V^{-1}(E)$.

\section{Exponential map and focal points}\label{sec:esp}
A geodesic $x:I\subset\mathds R\to V^{-1}\big(\left]-\infty,E\right[\big)$ relative to the metric $g_*$ \eqref{eq:Econf} will be called a \emph{Jacobi geodesic};
such a curve satisfies the second order differential equation:
\begin{equation}\label{eq:equazioneJacgeod}
\big(E-V(x(s))\big)\Dds\dot x(s) - g\big(\nabla V(x(s)),\dot
x(s)\big)\dot x(s) +\\
\frac 12 g\big(\dot x(s),\dot x(s)\big)\nabla
V(x(s))=0.
\end{equation}
Moreover, a non costant Jacobi geodesic $\gamma$ satisfies the conservation law
\begin{equation}\label{eq:conslaw}
\tfrac12\big(E-V(\gamma)\big)g(\dot\gamma,\dot\gamma)\equiv \lambda_\gamma\in \mathds{R}^{+}\setminus\{0\}.
\end{equation}
Given a Jacobi geodesic $\gamma:\left]0,a\right]\to V^{-1}\big(\left]-\infty,E\right[\big)$
satisfying $\lim\limits_{s\downarrow0}\gamma(s)=P\in V^{-1}(E)$, then the $g$-normalized
tangent vector $v_s:=\dot\gamma(s)/g\big(\dot\gamma(s),\dot\gamma(s)\big)^\frac12$ admits limit
$\lim\limits_{s\downarrow0}v_s=v_0\in T_P\big(V^{-1}(E)\big)^\perp$.
Indeed, using \cite[Lemma 2.2]{MT}, it can be seen that $v_s+\nabla V(\gamma(s))/g\left(\nabla V(\gamma(s)),\nabla V(\gamma(s))\right)^{1/2}$ is bounded in norm by an  function that is infinitesimal for $s\to 0$.
In this situation, we will say that $\gamma$ is a Jacobi geodesic \textit{starting orthogonally} to $V^{-1}(E)$. These geodesics will be used later in the definition of the normal exponential map of $V^{-1}(E)$ using these geodesics.

\begin{rem}\label{rem:diffeo}
Let us now recall a basic result that will be repeatedly used throughout the paper. We refer the reader to \cite[eq (3.16) and following discussion]{MT} for the details of this construction, that we will now briefly sketch.
For any $P\in V^{-1}(E)$ consider the trajectories $t\mapsto q(t,P)$ that are solutions of the Cauchy problem:
\begin{equation}\label{eq:startingesterne}
\left\{
\begin{aligned}
& \tfrac{\mathrm D}{\mathrm dt}\dot q  + \text{grad }V(q) = 0 \\
& q(0) = P \\
& \dot q(0)=0,
\end{aligned}
\right.
\end{equation}
where $\frac{\mathrm D}{\mathrm dt}$ is the covariant derivative of vector fields along $q$, and $\text{grad }V$ is the gradient of $V$ with respect to the Riemannian metric $g$.

Moreover, called $\gamma(P,\lambda) $  the unique Jacobi geodesic starting from $P \in V^{-1}(E)$ and satisfying
$\lambda_\gamma=\lambda$ (note that the uniqueness of $\gamma(P,\lambda)$  follows again from the Maupertuis Principle),
we have that
$$ q(t,P)=\gamma(P,\lambda)(s),\quad\text{where\ }
t= t(s)=
\int_0^s \frac{\sqrt{\lambda}\,\mathrm dr}{E-V\big(\gamma(P,\lambda)(r)\big)},$$
and then
 $t\mapsto q(t,P)$ is a reparameterization of $s\mapsto\gamma(P,\lambda)(s)$.

Using Maupertuis--Jacobi principle \cite[Proposition 2.1]{MT}, one can show that, setting $\tau=\sqrt t$, the map $q(\tau,P)$,  defined in $[0,\tau_0[\times V^{-1}(E)$, for a suitable $\tau_0$ sufficiently small, is a  $C^1$--diffeomorphism.
Moreover, $q(\tau,P)$ is a coordinate system in a neighborhood of $V^{-1}(E)$ and  $\frac{\partial q}{\partial \tau}(0,P)=-\frac12\text{grad }V(P)$, while $\frac{\partial q}{\partial P}(0,P)$ is the identity map.
In particular, it is worth remarking that a $C^1$-diffemorphism, say $\Phi$, between $V^{-1}(E)$ and $d_V^{-1}(\delta)$ is obtained by setting $\Phi(P)=q(\delta,P)$.
\end{rem}

\begin{defin}\label{def:exp}
For any $s > 0, \lambda > 0$ and $P \in V^{-1}(E)$ we denote by \textit{exponential map} (starting from $V^{-1}(E)$) the map:
\begin{equation}\label{eq:defesp}
\exp^{\perp}(P,\lambda)(s) = \gamma(P,\lambda)(s),
\end{equation}

\end{defin}

First of all, let us prove the following
\begin{prop}\label{prop:expC1} Fix any $\delta> 0$ such that $d_V^{-1}(\delta)$ is a $C^2$--hypersurface (cf \cite{GGP1,MT}).
Let $\Phi$ be the $C^1$-diffemorphism between $V^{-1}(E)$ and $d_V^{-1}(\delta)$ (see Remark \ref{rem:diffeo}). Let $\gamma_{\delta}$ be the solutions of the Cauchy problem for Jacobi geodesics with initial position,  $\gamma(\frac{\delta}{\sqrt{\lambda}})$ and initial speed $\dot \gamma(\frac{\delta}{\sqrt{\lambda}})$. Then
for any $s > 0$
\begin{equation}\label{eq:trastemp}
\gamma(P,\lambda)(s) = \gamma_{\delta}(P,\lambda)\big(s-\tfrac{\delta}{\sqrt{\lambda}}\big),
\end{equation}
and $\exp^{\perp}(\cdot,\cdot)(s)$ is of class $C^1$.
\end{prop}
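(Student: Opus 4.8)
The plan is to prove the time-translation identity \eqref{eq:trastemp} first, essentially from the autonomous character of the Jacobi geodesic equation, and then to use it to transport the whole discussion of $\exp^{\perp}$ away from the singular boundary $V^{-1}(E)$ and onto the regular hypersurface $d_V^{-1}(\delta)$, where classical smooth dependence on the initial data is available. For \eqref{eq:trastemp}: equation \eqref{eq:equazioneJacgeod} does not depend explicitly on $s$, and, $g$ being of class $C^3$ and $V$ of class $C^2$, its right-hand side is locally Lipschitz in $(x,\dot x)$, so its Cauchy problem has a unique maximal solution. Hence, for any $s_0$, the curve $u\mapsto\gamma(P,\lambda)(u+s_0)$ solves \eqref{eq:equazioneJacgeod} as well; taking $s_0=\delta/\sqrt\lambda$ one obtains a solution with initial position $\gamma(P,\lambda)(\delta/\sqrt\lambda)$ and initial velocity $\dot\gamma(P,\lambda)(\delta/\sqrt\lambda)$, which by uniqueness must coincide with $\gamma_\delta(P,\lambda)$; the substitution $u=s-\delta/\sqrt\lambda$ then gives \eqref{eq:trastemp}. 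Note also that, by the conservation law \eqref{eq:conslaw}, $\gamma(P,\lambda)$ has constant $g_*$-speed $\sqrt\lambda$, so $\gamma(P,\lambda)(\delta/\sqrt\lambda)$ is the point at $g_*$-arclength $\delta$ from $P$ along $\gamma$; as this segment has $g_*$-length $\delta=d_V\big(\Phi(P)\big)$ it realizes the distance, and in fact $\gamma(P,\lambda)(\delta/\sqrt\lambda)=\Phi(P)$.

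I then reduce the $C^1$ assertion to a statement about initial data. Let $\mathcal G(v,w,u)$ denote the position at parameter $u$ of the $g_*$-geodesic with initial position $v$ and initial velocity $w$. By \eqref{eq:trastemp},
\begin{equation*}
\exp^{\perp}(P,\lambda)(s)=\mathcal G\Big(\Phi(P),\ \dot\gamma(P,\lambda)\big(\tfrac{\delta}{\sqrt\lambda}\big),\ s-\tfrac{\delta}{\sqrt\lambda}\Big).
\end{equation*}
Since $g_*=\tfrac12(E-V)g$ is of class $C^2$ on the open potential well, the $g_*$-geodesic equation has there coefficients of class $C^1$, whence $\mathcal G$ is of class $C^1$ jointly in $(v,w,u)$ wherever the corresponding geodesic segment stays in the open well — an open condition. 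Fixing $s>0$ and localizing $(P,\lambda)$ near a given $(P_0,\lambda_0)$, the relevant segments are sub-arcs of $\gamma(P,\lambda)|_{(0,a]}$ (with $\gamma_\delta$ extended to negative parameters when $s<\delta/\sqrt\lambda$, harmlessly) and thus remain in a fixed compact subset of the open well, so $\mathcal G$ is $C^1$ there. Because $\lambda\mapsto\delta/\sqrt\lambda$ is smooth for $\lambda>0$ and $\Phi$ is of class $C^1$ by Remark \ref{rem:diffeo}, it only remains to show that $(P,\lambda)\mapsto\dot\gamma(P,\lambda)(\delta/\sqrt\lambda)\in TM$ is of class $C^1$.

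For this, by \eqref{eq:conslaw} the vector $\dot\gamma(P,\lambda)(\delta/\sqrt\lambda)$ has $g_*$-norm $\sqrt\lambda$, and — $\gamma(P,\lambda)|_{[0,\delta/\sqrt\lambda]}$ being a $d_V$-minimizer up to $\Phi(P)$, and the minimizer being unique (as holds, e.g., for $\delta$ small) — the gradient formula of \cite{MT} recalled in the Introduction shows it equals $\sqrt\lambda\,\nabla^{g_*}d_V\big(\Phi(P)\big)$, with $\nabla^{g_*}d_V$ restricted to $d_V^{-1}(\delta)$ being the $g_*$-unit normal field $\nu$ of that hypersurface (pointing into the well). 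Now $d_V^{-1}(\delta)$ is a $C^2$-hypersurface, so $\nu$ is a $C^1$ vector field along it, and $\Phi\colon V^{-1}(E)\to d_V^{-1}(\delta)$ is a $C^1$-diffeomorphism; hence $(P,\lambda)\mapsto\sqrt\lambda\,\nu\big(\Phi(P)\big)$ is of class $C^1$. Combined with the previous paragraph, this proves that $\exp^{\perp}(\cdot,\cdot)(s)$ is of class $C^1$.

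The only genuine obstacle is this last step: the singularity of $g_*$ on $V^{-1}(E)$ prevents one from obtaining the $C^1$-dependence of the initial data of $\gamma_\delta$ by differentiating the $g_*$-geodesic flow starting from the boundary. Identity \eqref{eq:trastemp} together with the construction of Remark \ref{rem:diffeo} circumvents precisely this, by trading the singular boundary $V^{-1}(E)$ for the regular hypersurface $d_V^{-1}(\delta)$ — on which the unit normal is $C^1$ and the geodesic flow is $C^1$ — the bridge between them being the $C^1$-diffeomorphism $\Phi$. (Alternatively, the $C^1$-regularity of the initial velocity — and indeed of $\exp^{\perp}(\cdot,\cdot)(s)$ directly — can be read off from the reparametrized flow $q(\tau,P)$ of Remark \ref{rem:diffeo}, which is $C^1$ up to the boundary, once one checks that the change of parameter $\tau\leftrightarrow s$ is a $C^1$-diffeomorphism for $s>0$.) Everything else is routine ODE bookkeeping, sketched above.
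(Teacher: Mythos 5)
Your proof is correct, and its overall architecture coincides with the paper's: establish \eqref{eq:trastemp} from uniqueness of the Cauchy problem for Jacobi geodesics, thereby reducing everything to the $C^1$-dependence on $(P,\lambda)$ of the initial data of $\gamma_\delta$ on the regular hypersurface $d_V^{-1}(\delta)$, and then invoke classical $C^1$-dependence of the $g_*$-geodesic flow inside the open potential well. Where you genuinely diverge is at the crux, namely the $C^1$-regularity of the initial velocity $\dot\gamma(P,\lambda)(\delta/\sqrt\lambda)$. The paper obtains it from the regular Newtonian flow $q(t,P)$ of \eqref{eq:startingesterne} (which is $C^1$ up to the boundary, being the flow of a nonsingular ODE), producing the hitting time $t_\delta(P,\lambda)$ of class $C^1$ via the implicit function theorem and reading off the velocity as $\frac{\sqrt\lambda}{E-V(q(t_\delta,P))}\,\dot q(t_\delta,P)$. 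You instead identify the velocity intrinsically as $\sqrt\lambda\,\nu\big(\Phi(P)\big)$, with $\nu=\nabla^{g_*}d_V$ the inward $g_*$-unit normal of $d_V^{-1}(\delta)$, using the conservation law \eqref{eq:conslaw} together with the gradient formula of \cite{MT}; the regularity then follows from the $C^2$-ness of the hypersurface and the $C^1$-ness of $\Phi$, both of which are among the stated inputs of the proposition. Your route buys a cleaner, coordinate-free description of the initial data and dispenses with the implicit function theorem, at the price of invoking differentiability of $d_V$ on $d_V^{-1}(\delta)$ (hence uniqueness of minimizers there, i.e.\ $\delta$ small --- a restriction the paper's proof also imposes when applying the implicit function theorem). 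The parenthetical alternative you mention at the end is essentially the paper's own argument.
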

\begin{proof}

First of all, recall from Remark \ref{rem:diffeo} that
$q(\sqrt{t},P)$ is a coordinate system in a neighborhood of $V^{-1}(E)$.
%In particular, setting $\tau=\sqrt{t}$, $\frac{\partial q}{\partial \tau}(P,0)=-\frac12\text{grad }V(P)$, (while $\frac{\partial q}{\partial P}(P,0)$ is the indentity map).
Therefore, for any $\delta > 0$ such that $\gamma(P,\lambda)$  is defined in a neighborhood of $\frac{\delta}{\sqrt{\lambda}}$ (the value of the parameter at which $\gamma$ reaches $d_V^{-1}(\delta)$), there exists one and only one $t_\delta(P,\lambda)$ such that $q(t_\delta(P,\lambda),P)$ intersects $d_V^{-1}(\delta)$.

Using the Implicit Function Theorem, we deduce the existence of $\delta$ sufficiently small such that the map $t_\delta(P,\lambda)$ is of class $C^1$.

Now, since
\[
\dot \gamma(P,\lambda)(s)=\frac{dt}{ds} \dot q(t,P)\frac{dt}{ds}= \frac{\sqrt{\lambda}}{E-V(q(t,P))}\dot q(t,P)
\]
we have that \eqref{eq:trastemp} holds for any $s > 0$. Now recall that
$\gamma_{\delta}$ is the Jacobi geodesic satisfying the initial conditions
\begin{equation}\label{eq:datiiniziali}
\left\{
\begin{aligned}
& \gamma_\delta(P,\lambda)(0) = q(t_\delta(P,\lambda),P), \\
& \dot \gamma_{\delta}(P,\lambda)(0)=  \frac{\sqrt{\lambda}}{E-V(q(t_\delta(P,\lambda)),P)}\dot q(t_\delta(P,\lambda),P),
\end{aligned}
\right.
\end{equation}

Since $(t,P)\mapsto q(t,P)$ and $(t,P)\mapsto\dot q(t,P)$ are of class $C^1$ (Remark \ref{rem:diffeo}), standard regularity properties of ODE's  give the $C^1$--regularity of
the map $(P,\lambda)\mapsto\dot \gamma_{\delta}(P,\lambda)\big(s-\frac{\delta}{\sqrt{\lambda}}\big)$  concluding the proof.
\end{proof}

\begin{rem}\label{eq:regvel}
Note that classical regularization methods show that  the map $(P,\lambda)\mapsto\dot \gamma_{\delta}(P,\lambda)\big(s-\frac{\delta}{\sqrt{\lambda}}\big)$ is also of class $C^1$.
\end{rem}

\begin{rem}\label{rem:ort}
It is important to note that $\gamma_\delta$ starts from the point:
\[P_\delta := q\big(t_\delta(P),P\big) \in d_V^{-1}(\delta) =: N_\delta,\]
with velocity orthogonal to the hypersurface $N_\delta$.
\end{rem}

\begin{rem}\label{rem:hessconf}
Consider a Riemannian metric conformal to $g$, say $\phi g$ with $\phi$ positive and smooth real map, and the corresponding action integral
\[
h(x)= \frac12\int_0^1\phi(x)g(\dot x, \dot x)\,\mathrm ds
\]
on the space $X$ of the $H^{1,2}$--curves from $[0,1]$ to $M$ such that $x(0) \in N$, $x(1)=Q$, where $N$ is a smooth hypersurface of $M$, and $Q$ is a fixed point in $M$. We recall that the critical points $\gamma$ on $h_{|X}$ satisfy the ordinary differential equation:
\[
\frac{\mathrm D}{\mathrm ds}\big[\varphi\big(\gamma(s)\big)\dot\gamma(s)\big]=\tfrac12 g\big(\dot\gamma(s),\dot\gamma(s)\big)\nabla\varphi(\gamma(s)),
\]
and the boundary conditions
\[
\gamma(0) \in N, \; \gamma(1)=Q.
\]
(Here $\nabla\varphi$ denotes the gradient of $\varphi$ with respect to the metric $g$).
Moreover, the tangent space of $X$ at $\gamma$ is given by the $H^{1,2}$ vector fields $\xi$ along $\gamma$ such that
$\xi(0) \in T_{\gamma(0)}N$, the tangent space of $N$ at $\gamma(0)$ and $\xi(1)=0$,
while the Hessian of $h$ at a critical point $\gamma$ is given by
\begin{multline*}
H^{h}(\gamma)=\int_0^1\frac12 g(\dot\gamma,\dot\gamma)g(H^\varphi(\gamma)[\xi],\xi)+2g(\nabla\varphi(\gamma),\xi)g(\tfrac{\mathrm D}{\mathrm ds}\xi,\dot\gamma)\\+\varphi(\gamma)\left[g(R(\xi,\dot\gamma)\xi,\dot\gamma)+g(\tfrac{\mathrm D}{\mathrm ds}\xi,\tfrac{\mathrm D}{\mathrm ds}\xi)\right]\,\mathrm ds-\varphi(\gamma)g(\nabla_\xi\xi,\dot\gamma)\vert_{s=0}
\end{multline*}
(where $\nabla$ is the covariant derivative with respect to the metric $g$).
By polarization, we obtain the related bilinear form:
\begin{multline}\label{eq:index-form}
I(\xi,\eta)=\int_0^1\Big[\frac12 g(\dot\gamma,\dot\gamma)g(H^\varphi(\gamma)[\xi],\eta)\\
+g(\nabla\varphi(\gamma),\xi)g(\tfrac{\mathrm D}{\mathrm ds}\eta,\dot\gamma)+g(\nabla\varphi(\gamma),\eta)g(\tfrac{\mathrm D}{\mathrm ds}\xi,\dot\gamma)
\\+\varphi(\gamma)\left[g(R(\dot\gamma,\xi)\dot\gamma,\eta)+g(\tfrac{\mathrm D}{\mathrm ds}\xi,\tfrac{\mathrm D}{\mathrm ds}\eta)\right]\,\Big]\mathrm ds+\\
-\frac12\varphi(\gamma)\left[g(\nabla_\xi\eta,\dot\gamma)+g(\nabla_\eta\xi,\dot\gamma)\right]\Big\vert_{s=0}.
\end{multline}
The Jacobi field equation is:
\begin{multline*}
\frac12g(\dot\gamma,\dot\gamma) H^\varphi(\gamma)[\xi]-\tfrac{\mathrm D}{\mathrm ds}\left[g\big(\nabla\varphi(\gamma),\xi\big)\dot\gamma+
\varphi(\gamma)\tfrac{\mathrm D}{\mathrm ds}\xi\right] \\
+ g\big(\tfrac{\mathrm D}{\mathrm ds}\xi,\dot\gamma\big)\nabla\varphi(\gamma)+\varphi(\gamma) R(\dot\gamma,\xi)\dot\gamma=0,
\end{multline*}
while the vector fields in the kernel of $I$ are Jacobi fields $\xi$ such that $\xi(1)=0$ and
\begin{equation}\label{eq:intparti}
\varphi(\gamma)\tfrac{\mathrm D}{\mathrm ds}\xi+g\big(\nabla\varphi(\gamma),\xi\big)\dot\gamma-\varphi(\gamma)\nabla_\xi\dot\gamma
\end{equation}
is orthogonal to $T_{\gamma(0)}N$ at the instant $s=0$.

As pointed out in \cite[Definition 4.6]{MT},
for the Jacobi metric the Jacobi fields differential equation in the interval $[0,a]$ is
\begin{multline}\label{eq:eqkernel}
-\frac{\mathrm D}{\mathrm ds}\Big((E-V(\gamma))\tfrac{\mathrm D}{\mathrm ds}\xi\Big) + \big(E-V(\gamma)\big)R(\dot \gamma,\xi)\dot \gamma+\\
+\frac{\mathrm D}{\mathrm ds}\big(g(\nabla V(\gamma),\xi)\dot \gamma\big)- g(\dot \gamma,\tfrac{\mathrm D}{\mathrm ds}\xi)\nabla V(\gamma)+\\
-\frac12g(\dot\gamma,\dot\gamma)H^{V}(\gamma)[\xi]=0 \text{ for all }s \in\left]0,a\right].
\end{multline}
\end{rem}

We recall now the definition (given in \cite{MT}) of point which is conjugate to $V^{-1}(E)$.
\begin{defin}\label{def:punto-coniugato}
A point $\gamma(s_0)$, on a Jacobi geodesic $\gamma$ starting from $V^{-1}(E)$ is said to be \textit{conjugate} to $V^{-1}(E)$ if there exists a non
identically zero vector field $\xi$ along $\gamma$ defined in $[0,a]$, with $\xi(a)=0$, such that
\begin{enumerate}[(a)]
\item\label{itm:a} $\xi \in C^0\big([0,s_0]\big) \cap C^2\big(]0,a]\big)$;\smallskip

\item$\int_0^{a}(E-V(\gamma))g\big(\tfrac{\mathrm D}{\mathrm ds}\xi,\tfrac{\mathrm D}{\mathrm ds}\xi\big)\,\mathrm ds < +\infty$;
\smallskip

\item $\xi$ satisfies equation \eqref{eq:eqkernel} in $]0,a]$;
\smallskip

\item\label{itm:d} $\xi(0)\in T_{\gamma(0)}V^{-1}(E)$;\smallskip

\item\label{itm:e} the continuous extension at $s=0$ of the vector field:
\begin{equation}\label{eq:contvecfield}
\big(E-V(\gamma)\big)\tfrac{\mathrm D}{\mathrm ds}\xi - g\big(\nabla V(\gamma),\xi\big)\dot \gamma
\end{equation}
is a multiple of $\nabla V(\gamma(0))$.
\end{enumerate}
Recall that $\nabla V(\gamma(0))$ is parallel to the limit unit  vector of $\dot \gamma$.
A vector field $\xi$ along $\gamma$ satisfying (a)---(e) above will be called an \emph{$E$-Jacobi field}.
\end{defin}

%\noindent
%\fbox{A vector field $\xi$ along $\gamma$ satisfying (a)---(e) above will be called an \emph{$E$-Jacobi field}.}\hfill\break

Let $\gamma:[0,a]\to M$ be a Jacobi geodesic starting orthogonally to $V^{-1}(E)$. For $s>0$ small
enough, let $\mathcal N_s$ denote the set of points having $d_V$-distance from $V^{-1}(E)$ equal to $s$, so that
$p_s=\gamma(\sigma_s)\in\mathcal N_s$, where $\sigma_s=s/\sqrt{\lambda_\gamma}$.
Observe that, if $s>0$ is sufficiently small, $\mathcal{N}_s$ is a $C^2$ embedded hypersurface of $M$, as shown in \cite{GGP1}.
%A sufficiently small neighborhood $\widetilde{\mathcal N}_s$
%of $p_s$ in $\mathcal N_s$ is a smooth embedded submanifold of $M$.
Denote by $\Sigma_s^*$ the shape operator of ${\mathcal N}_s$ at
$p_s$ relatively to the metric $g_*$.
From the relation
\begin{equation}\label{eq:nabla*}
\nabla^*_X Y=\nabla_X Y-\frac1{2(E-V)}\left[g(\nabla V, X)Y+g(\nabla V,Y)X-g(X,Y)\nabla V\right]
\end{equation}
the following expression for the covariant differentiation $\frac{\mathrm D^*}{\mathrm ds}$ along $\gamma$ relative to the
Levi--Civita connection of $g_*$  holds:
\begin{equation}\label{eq:Ds*}
\frac{\mathrm D^*}{\mathrm ds}\eta=
\frac{\mathrm D}{\mathrm ds}\eta-\frac{1}{2(E-V(\gamma))}\left[g(\nabla V(\gamma),\dot\gamma)\eta+g(\nabla V(\gamma),\eta)\dot\gamma-g(\dot\gamma,\eta)\nabla V(\gamma)\right].
\end{equation}
Using \eqref{eq:nabla*} again, given $\eta,\xi\in T_{p_s}{\mathcal N}_s$,  the shape operator with respect to the conformal metric $\Sigma^*_s$  satisfies the identity
\begin{equation}\label{eq:shape*}
g_*\big(\Sigma^*_s(\xi),\eta\big)=\frac12(E-V(\gamma))g(\Sigma_s(\xi),\eta)+\frac14g(\xi,\eta)g(\nabla V(\gamma),\dot\gamma),
\end{equation}
where we have exploited the fact that $\dot\gamma\in(T_{p_s}{\mathcal N}_s)^\perp$.

Moreover, introduced the Riemann curvature tensor of $g_*$, i.e. $$R^*(X,Y)=[\nabla^*_X,\nabla^*_Y]-\nabla^*_{[X,Y]},$$
then it can be seen that \eqref{eq:eqkernel} is equivalent to the equation
\begin{equation}\label{eq:Jacobieq*}
\left(\tfrac{\mathrm D^*}{\mathrm ds}\right)^2\xi(s)=R^*\big(\dot\gamma(s),\xi(s)\big)\dot\gamma(s).
\end{equation}
This equation and the  skew--symmetry  of the Riemann tensor $R^*$ also imply that,
for every $E$-Jacobi field along $\gamma$, the  quantity $\lambda_\xi=2\,g_*\big(\tfrac{\mathrm D^*}{\mathrm ds}\xi(s),\dot\gamma(s)\big)$ is constant
on $]0,a]$. Also observe that  \eqref{eq:Ds*} implies that:
\begin{equation}\label{eq:lambdaxi}
\lambda_\xi=2\,g_*\big(\tfrac{\mathrm D^*}{\mathrm ds}\xi,\dot\gamma\big)=-\tfrac12g\big(\nabla V(\gamma),\xi)\,g(\dot\gamma,\dot\gamma)+\big[E-V(\gamma)\big]\,g\big(\tfrac{\mathrm D}{\mathrm ds}\xi,\dot\gamma).
\end{equation}

\begin{rem}\label{rem:cost}
Observe that, if $\xi$ is an $E$--Jacobi field along $\gamma$ with $\xi(a)=0$, then $g_*(\xi,\gamma)$ identically vanishes on $[0,a]$.
Indeed, we have  just proved that the map
$g_*\big(\tfrac{\mathrm D^*}{\mathrm ds}\xi(s),\dot\gamma(s)\big)$ is constant, from which it follows that  $g_*(\xi,\gamma)$ is an affine function.  Moreover, $\xi(a)=0$ implies that $g_*(\xi,\gamma)$ vanishes at $s=a$. Let us now prove that $g_*(\xi,\gamma)$ can be extended continuously by setting it equal to $0$ at $s=0$.

First, observe that $\xi$ is continuous at $s=0$  (by condition \eqref{itm:a} of Definition~\ref{def:punto-coniugato}), and  it can be seen that $(E-V(\gamma))$ behaves like $s^{2/3}$ near $s=0$,
see \cite[Remark 2.4]{MT}. The conservation law \eqref{eq:conslaw}  then implies that $\dot\gamma$ behaves like $s^{-1/3}$ near $s=0$, and therefore
$g_*(\xi,\gamma)$ vanishes at $s=0$. Thus, $g_*(\xi,\gamma)$  vanishes identically  on $[0,a]$ and, in particular, $\xi(\sigma_s)\in T_{\gamma(\sigma_s)}{\mathcal N}_s$.
\end{rem}
Finally, we can give the main result of this section:
\begin{teo}\label{thm:equiv}
The point $\gamma(a)$ is conjugate to $V^{-1}(E)$ if and only if $\gamma(a)$ is a critical value of $\exp^{\perp}(\cdot,\cdot)(a)$.
\end{teo}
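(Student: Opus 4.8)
The plan is to reduce the assertion, via Proposition~\ref{prop:expC1} and Remark~\ref{rem:diffeo}, to the classical equivalence between focal points of a smooth hypersurface and critical values of its normal exponential map, applied to the $C^2$--hypersurface $N_\delta=d_V^{-1}(\delta)$ and the Riemannian metric $g_*$, which is of class $C^2$ away from $V^{-1}(E)$. First, fix $\delta$ as in Proposition~\ref{prop:expC1} and small enough that $a>\tfrac{\delta}{\sqrt{\lambda_\gamma}}$. By \eqref{eq:conslaw} the Jacobi geodesic $\gamma_\delta(P,\lambda)$ has constant $g_*$--speed $\sqrt\lambda$, and by Remark~\ref{rem:ort} it issues from $\Phi(P)\in N_\delta$ orthogonally to $N_\delta$; hence \eqref{eq:trastemp} exhibits $\exp^{\perp}(P,\lambda)(a)=\gamma_\delta(P,\lambda)\big(a-\tfrac{\delta}{\sqrt\lambda}\big)$ as the point reached at $g_*$--distance $\sqrt\lambda\,a-\delta$ along the $g_*$--geodesic normal to $N_\delta$ at $\Phi(P)$. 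In other words, after the $C^1$--change of variables $(P,\lambda)\mapsto(\Phi(P),\sqrt\lambda\,a-\delta)$ --- a diffeomorphism onto an open subset of the normal bundle of $N_\delta$, by Remark~\ref{rem:diffeo} --- the map $\exp^{\perp}(\cdot,\cdot)(a)$ becomes the $g_*$--normal exponential map of $N_\delta$. Therefore $\gamma(a)$ is a critical value of $\exp^{\perp}(\cdot,\cdot)(a)$ (at the parameter $(P,\lambda_\gamma)$ corresponding to $\gamma$) if and only if $\gamma(a)$ is a focal point of $N_\delta$ along $\gamma_\delta$, i.e.\ if and only if there is a non--trivial $N_\delta$--Jacobi field $\zeta$ along $\gamma_\delta$ with $\zeta\big(a-\tfrac{\delta}{\sqrt{\lambda_\gamma}}\big)=0$ --- where an $N_\delta$--Jacobi field is a solution of \eqref{eq:Jacobieq*} coming from a variation of $\gamma_\delta$ through $g_*$--geodesics normal to $N_\delta$, equivalently one with $\zeta(0)\in T_{\Phi(P)}N_\delta$ and $\tfrac{\mathrm D^*}{\mathrm ds}\zeta(0)+\Sigma^*_\delta\big(\zeta(0)\big)$ $g_*$--orthogonal to $N_\delta$.

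It then suffices to match bijectively, by restriction along $\gamma$ and backward continuation, the $N_\delta$--Jacobi fields $\zeta$ along $\gamma_\delta$ vanishing at the endpoint with the $E$--Jacobi fields $\xi$ along $\gamma$ (Definition~\ref{def:punto-coniugato}) vanishing at $a$. For $\xi\mapsto\zeta$: set $\zeta(r)=\xi\big(r+\tfrac{\delta}{\sqrt{\lambda_\gamma}}\big)$; by condition (c) of Definition~\ref{def:punto-coniugato} it solves \eqref{eq:Jacobieq*} and it vanishes at $r=a-\tfrac{\delta}{\sqrt{\lambda_\gamma}}$; by Remark~\ref{rem:cost} one has $g_*(\xi,\dot\gamma)\equiv0$, so $\zeta(0)\in T_{\Phi(P)}N_\delta$ and, differentiating, $\lambda_\xi=2\,g_*\big(\tfrac{\mathrm D^*}{\mathrm ds}\xi,\dot\gamma\big)\equiv0$, whence $\tfrac{\mathrm D^*}{\mathrm ds}\zeta(0)$ is already tangent to $N_\delta$; that it equals $-\Sigma^*_\delta(\zeta(0))$ is obtained from condition (e) of Definition~\ref{def:punto-coniugato} through \eqref{eq:Ds*} and \eqref{eq:shape*}, using that $\gamma$ meets every leaf $\mathcal N_s$ orthogonally, so that the orthogonality prescription at $V^{-1}(E)$ propagates along the geodesic foliation to $N_\delta$. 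For $\zeta\mapsto\xi$: extend $\zeta$ backward by solving the linear equation \eqref{eq:Jacobieq*} on $\left]0,a\right]$, calling $\xi$ the result; then (a) on $\left]0,a\right]$, (b) over compact subintervals, and (c) of Definition~\ref{def:punto-coniugato} are immediate, $\xi(a)=0$, and since the affine function $g_*(\zeta,\dot\gamma_\delta)$ vanishes at the endpoint and also at $r=0$ (because $\zeta(0)\in T_{\Phi(P)}N_\delta$), again $\lambda_\xi=0$ and $g_*(\xi,\dot\gamma)\equiv0$. The regularity of $\xi$ at $s=0$ --- continuity as in (a), the summability (b), and the limiting conditions (d), (e) --- is obtained by identifying $\xi$, through the $C^1$--structure of Proposition~\ref{prop:expC1} and Remark~\ref{rem:diffeo}, with the variation field $\left.\partial_\epsilon\right|_{0}\exp^{\perp}(P_\epsilon,\lambda_\epsilon)$ of a variation of $\gamma$ through Jacobi geodesics starting orthogonally to $V^{-1}(E)$: (d) follows from $P_\epsilon\in V^{-1}(E)$, (e) from the orthogonal--start behaviour of $\dot\gamma_\epsilon$ recalled before Definition~\ref{def:exp} (cf.\ \cite[Lemma~2.2]{MT}), and (a)--(b) from Remark~\ref{eq:regvel} and the asymptotics $E-V(\gamma)\sim s^{2/3}$, $|\dot\gamma|_g\sim s^{-1/3}$ recalled in Remark~\ref{rem:cost} (cf.\ \cite[Remark~2.4]{MT}), which yield $(E-V(\gamma))\,g\big(\tfrac{\mathrm D}{\mathrm ds}\xi,\tfrac{\mathrm D}{\mathrm ds}\xi\big)=O(1)$ as $s\downarrow0$.

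Restriction and backward continuation are mutually inverse and preserve non--triviality (a solution of the linear second--order equation \eqref{eq:Jacobieq*} vanishing on a subinterval of $\left]0,a\right]$ vanishes there identically, hence on $[0,a]$ by continuity), so chaining the two steps gives: $\gamma(a)$ is a critical value of $\exp^{\perp}(\cdot,\cdot)(a)$ $\iff$ there is a non--trivial $N_\delta$--Jacobi field along $\gamma_\delta$ vanishing at the endpoint $\iff$ there is a non--trivial $E$--Jacobi field along $\gamma$ vanishing at $a$ $\iff$ $\gamma(a)$ is conjugate to $V^{-1}(E)$. I expect the main obstacle to be the bijection of the second paragraph, where the singular boundary intervenes twice: in $\xi\mapsto\zeta$ one must extract the shape--operator prescription on $N_\delta$ from the limiting condition (e) at $V^{-1}(E)$; and in $\zeta\mapsto\xi$, since \eqref{eq:Jacobieq*} is singular at $s=0$, a generic backward continuation of an $N_\delta$--Jacobi field blows up as $s\downarrow0$, so one must argue that being $N_\delta$--adapted --- equivalently, lying in the kernel of the regularized index form of \cite{MT} --- selects the bounded solution and that this solution satisfies (a)--(e); this is precisely where the explicit $C^1$--coordinates of Remark~\ref{rem:diffeo} are needed. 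Alternatively this step can be bypassed by a nullity count: the index form of the singular problem on $[0,a]$ and that of the smooth problem on $\big[\tfrac{\delta}{\sqrt{\lambda_\gamma}},a\big]$ have the same nullity, because varying the short minimizing arc $\gamma\big|_{[0,\delta/\sqrt{\lambda_\gamma}]}$ with free endpoint on $N_\delta$ contributes neither index nor nullity.
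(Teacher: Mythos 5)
Your overall strategy coincides with the paper's: use \eqref{eq:trastemp} to identify $\exp^{\perp}(\cdot,\cdot)(a)$ with the normal exponential map of the smooth hypersurface $N_\delta=d_V^{-1}(\delta)$, invoke the classical focal--point/critical--value equivalence there, and then match $N_\delta$-Jacobi fields vanishing at the endpoint with $E$-Jacobi fields vanishing at $a$. The paper is organized in exactly this way (Proposition~\ref{thm:conjcritical}, Lemma~\ref{lem:costanza}, and the final Proposition). The problem is that the two halves of your matching are precisely where all the content of the theorem is concentrated, and in both places you assert the key step rather than prove it.

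In the direction $\xi\mapsto\zeta$ you need the shape--operator condition $\tfrac{\mathrm D^*}{\mathrm ds}\xi(\sigma_\delta)+\Sigma^*_\delta\big(\xi(\sigma_\delta)\big)\parallel\dot\gamma(\sigma_\delta)$, and you claim it is ``obtained from condition (e) \dots{} propagates along the geodesic foliation to $N_\delta$''. This propagation is a \emph{converse} of Lemma~\ref{lem:costanza}: one must show that the quantity $g_*\big(\tfrac{\mathrm D^*}{\mathrm d\sigma}\xi+\Sigma^*_s(\xi),\nu\big)$, which that lemma shows to be constant along $\gamma$, tends to $0$ as $s\downarrow0$; this limit is delicate because both $\Sigma^*_s$ and $\tfrac{\mathrm D^*}{\mathrm ds}\xi$ degenerate at the boundary ($E-V\sim s^{2/3}$, $g(\dot\gamma,\dot\gamma)^{1/2}\sim s^{-1/3}$), and you do not carry out the computation. (The paper's Proposition~\ref{thm:conjcritical} instead builds an explicit variation through $\Phi^{-1}$ and concludes by Cauchy uniqueness.) In the direction $\zeta\mapsto\xi$, the entire difficulty is establishing conditions (a), (b), (e) of Definition~\ref{def:punto-coniugato} at the singular endpoint, and here your argument rests on the unsubstantiated claim that $\big(E-V(\gamma)\big)\,g\big(\tfrac{\mathrm D}{\mathrm ds}\xi,\tfrac{\mathrm D}{\mathrm ds}\xi\big)=O(1)$. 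This does not follow from the asymptotics of $E-V$ and $\dot\gamma$ alone: one needs a quantitative bound on $\tfrac{\mathrm D}{\mathrm ds}\xi$ as $s\downarrow0$, i.e.\ on the $P$-derivative of the flow $q(t,P)$ combined with the singular reparametrization $t(s)$, and the $C^1$ statement of Remark~\ref{rem:diffeo} does not by itself provide it. The paper obtains exactly this control by a different, variational route: it shows $I_s^a(\xi,\eta)=0$ for admissible $\eta$ and then uses the minimization and uniqueness arguments of \cite[Propositions 4.15 and 4.16]{MT} to get the uniform bound $\int_s^{s_*}\big(E-V(\gamma)\big)\,g\big(\tfrac{\mathrm D}{\mathrm ds}\xi,\tfrac{\mathrm D}{\mathrm ds}\xi\big)\,\mathrm ds\le C$, from which (b), continuity at $s=0$, (d), and finally (e) (by integration by parts and regularization) follow. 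Your closing ``nullity count'' alternative begs the same question, since comparing the nullity of the singular index form on $[0,a]$ with that of the regular one on $[\sigma_\delta,a]$ is equivalent to the bijection you are trying to establish. In short: correct skeleton, correctly located obstacles, but the singular--endpoint analysis that constitutes the actual proof is missing.
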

The proof is divided into two Propositions.
\begin{prop}\label{thm:conjcritical}
Let $\gamma(a)$ be conjugate to $V^{-1}(E)$. Then it is a critical value of $\exp^{\perp}(\cdot,\cdot)(a)$.
\end{prop}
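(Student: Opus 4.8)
The plan is to show that the differential of $\exp^{\perp}(\cdot,\cdot)(a)$ at the point $(P,\lambda)$ with $\gamma=\gamma(P,\lambda)$ fails to be injective; since the domain $V^{-1}(E)\times\left]0,+\infty\right[$ and the target $M$ have the same dimension $n:=\dim M$, this is the same as its failing to be onto, and hence makes $(P,\lambda)$ a critical point of $\exp^{\perp}(\cdot,\cdot)(a)$ and $\gamma(a)$ one of its critical values. To this end I would apply the differential to the tangent vector naturally attached to an $E$-Jacobi field witnessing the conjugacy. So let $\xi$ be a non identically zero $E$-Jacobi field along $\gamma$ with $\xi(a)=0$; by condition~(d) of Definition~\ref{def:punto-coniugato}, $v_0:=\xi(0)\in T_PV^{-1}(E)$, and put $\mu_0:=\lambda_\xi=2\,g_*\big(\tfrac{\mathrm D^*}{\mathrm ds}\xi,\dot\gamma\big)$, which is constant on $\left]0,a\right]$. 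Choose a $C^1$ curve $r\mapsto P_r\in V^{-1}(E)$ with $P_0=P$ and $\tfrac{\mathrm d}{\mathrm dr}P_r\big|_{r=0}=v_0$, set $\lambda_r:=\lambda+r\,\mu_0$, and form the variation $\Gamma(r,s):=\exp^{\perp}(P_r,\lambda_r)(s)=\gamma(P_r,\lambda_r)(s)$. By Proposition~\ref{prop:expC1} and Remark~\ref{eq:regvel}, for each fixed $s>0$ the maps $r\mapsto\Gamma(r,s)$ and $r\mapsto\partial_s\Gamma(r,s)$ are of class $C^1$, so $J(s):=\partial_r\Gamma(0,s)$ is a well-defined vector field along $\gamma$ on $\left]0,a\right]$ and $\mathrm d\exp^{\perp}(\cdot,\cdot)(a)\big[(v_0,\mu_0)\big]=J(a)$.

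The core of the proof is that $J=\xi$, and in particular that $J$ is itself an $E$-Jacobi field. As the variational field of a variation of $\gamma$ through $g_*$-geodesics, $J$ solves the Jacobi equation \eqref{eq:eqkernel}, equivalently \eqref{eq:Jacobieq*}, on $\left]0,a\right]$, so (c) holds; since $\Gamma(r,s)\to P_r$ as $s\downarrow0$, the $C^1$ coordinate system $q(\sqrt t,P)$ of Remark~\ref{rem:diffeo} gives the continuous extension of $J$ at $s=0$ with $J(0)=v_0\in T_PV^{-1}(E)$, hence (a) and (d); and the remaining conditions — finiteness of the energy integral (b) and the ``multiple of $\nabla V$'' condition (e), the latter being the infinitesimal counterpart of the fact that every deformed geodesic $\gamma(P_r,\lambda_r)$ issues orthogonally from $V^{-1}(E)$ — follow from the local analysis of \cite{MT} and the asymptotics $E-V(\gamma)\sim s^{2/3}$, $|\dot\gamma|\sim s^{-1/3}$ recalled in Remark~\ref{rem:cost}. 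Next, $J(0)=v_0$, and differentiating the conservation law \eqref{eq:conslaw} in the form $g_*(\partial_s\Gamma,\partial_s\Gamma)\equiv\lambda_r$ at $r=0$, using that $\nabla^*$ is the Levi-Civita connection of $g_*$, gives $\lambda_J=2\,g_*\big(\tfrac{\mathrm D^*}{\mathrm ds}J,\dot\gamma\big)=\tfrac{\mathrm d}{\mathrm dr}\lambda_r\big|_{r=0}=\mu_0$. Since an $E$-Jacobi field along $\gamma$ is uniquely determined by the pair $\big(\xi(0),\lambda_\xi\big)\in T_PV^{-1}(E)\times\mathds R$ — recall from the analysis in \cite[Section~4]{MT} that these fields form an $n$-dimensional space with that pair as parameter — we obtain $J=\xi$; in particular $(v_0,\mu_0)\neq0$, for otherwise $\xi\equiv0$. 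Therefore $\mathrm d\exp^{\perp}(\cdot,\cdot)(a)\big[(v_0,\mu_0)\big]=\xi(a)=0$ with $(v_0,\mu_0)\neq0$, so $(P,\lambda)$ is a critical point and $\gamma(a)$ a critical value of $\exp^{\perp}(\cdot,\cdot)(a)$.

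The step I expect to be the main obstacle is precisely the verification, only sketched above, that the variational field $J$ is a genuine $E$-Jacobi field: the delicate points are its behaviour as $s\downarrow0$, namely continuous extendibility at $s=0$ (with $J(0)=v_0$), the integrability condition (b), and the ``multiple of $\nabla V$'' condition (e). This is exactly where the singular character of $g_*$ along $V^{-1}(E)$ bites, and it is handled by the local analysis of \cite{MT} — the regularization $\tau=\sqrt t$, the diffeomorphism of Remark~\ref{rem:diffeo}, and the asymptotics of Remark~\ref{rem:cost}. Alternatively one may sidestep part of this by transferring the computation, via \eqref{eq:trastemp}, to the $C^2$ hypersurface $N_\delta=d_V^{-1}(\delta)$: along it $\gamma_\delta$ is a $g_*$-normal geodesic of a metric that is smooth in the interior, so its variational field is classically a Jacobi field, and the only point left to check is that restriction to $\big[\tfrac{\delta}{\sqrt\lambda},a\big]$ is a bijection between the $E$-Jacobi fields along $\gamma$ and the classical $N_\delta$-Jacobi fields along $\gamma_\delta$.
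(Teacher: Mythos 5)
Your overall strategy --- realize the given $E$-Jacobi field $\xi$ as the variational field of a variation of $\gamma$ through normal Jacobi geodesics, so that the differential of $\exp^{\perp}(\cdot,\cdot)(a)$ annihilates a nonzero vector --- is the same as the paper's, but the way you identify the variational field $J$ with $\xi$ is different, and that is exactly where your argument has a genuine gap. You match $J$ with $\xi$ at the singular endpoint $s=0$ through the data $\big(\xi(0),\lambda_\xi\big)$, which forces you to prove (i) that $J$ is a bona fide $E$-Jacobi field, in particular that it satisfies conditions (b) and (e) of Definition~\ref{def:punto-coniugato} at $s=0$, and (ii) that an $E$-Jacobi field is uniquely determined by the pair $\big(\xi(0),\lambda_\xi\big)$. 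Neither statement is proved in your write-up: both are deferred to ``the local analysis of \cite{MT}'', but neither appears in the present paper, and (ii) in particular is a uniqueness assertion for a singular initial value problem that is roughly as delicate as the proposition itself. You correctly flag (i) as the main obstacle; (ii) is an obstacle of the same magnitude that you treat as a citation.

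The paper circumvents both difficulties by never touching $s=0$. By Remark~\ref{rem:cost} one knows $\xi(\sigma_\delta)\in T_{\gamma(\sigma_\delta)}\mathcal N_{\delta}$, so one takes a curve $Q(r)$ in the \emph{regular} hypersurface $\mathcal N_{\delta}$ with $Q'(0)=\xi(\sigma_\delta)$, pulls it back to $P(r)=\Phi^{-1}\big(Q(r)\big)$ in $V^{-1}(E)$ via the $C^1$ diffeomorphism of Remark~\ref{rem:diffeo} (keeping $\lambda$ fixed), and identifies the variational field with $\xi$ on $[\sigma_\delta,a]$ by matching Cauchy data at the interior point $\sigma_\delta$, where the metric is smooth and classical ODE theory applies; this is precisely the alternative you sketch in your last sentence, except that in the paper it is not an alternative but the actual proof. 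Two further remarks. First, Remark~\ref{rem:cost} shows that $g_*(\xi,\dot\gamma)$ is affine and vanishes at both endpoints, hence $\lambda_\xi=0$ for every $E$-Jacobi field with $\xi(a)=0$; so your $\mu_0$ is automatically zero and the variation of $\lambda$ is superfluous, which is why the paper may keep $\lambda$ fixed. Second, even along the $\mathcal N_\delta$ route one still has to check that the covariant derivative of the variational field at $\sigma_\delta$ agrees with $\frac{\mathrm D}{\mathrm ds}\xi(\sigma_\delta)$ up to a multiple of $\dot\gamma(\sigma_\delta)$ that is killed by $\lambda_\xi=0$; this is where the tangency and shape-operator information (cf.\ Lemma~\ref{lem:costanza}) enters, and your proposal would need the analogous verification if you adopt that route.
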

\begin{proof}
Let $\xi$ be an $E$--Jacobi field in the sense of Definition \ref{def:punto-coniugato} such that $\xi(a)=0$. Fix $\delta > 0$ sufficiently small and consider $\mathcal N_{\frac{\delta}{\sqrt{\lambda}}}$ and $\xi({\frac{\delta}{\sqrt{\lambda}}})$. By Remarks~\ref{rem:ort} and \ref{rem:cost}, we have that $\xi({\frac{\delta}{\sqrt{\lambda}}}) \in T_{\gamma({\frac{\delta}{\sqrt{\lambda}}})}\mathcal N_{\frac{\delta}{\sqrt{\lambda}}}$. Now
fix $Q(r)$ a smooth curve in $\mathcal N_{\frac{\delta}{\sqrt{\lambda}}}$ such that $Q'(0)=\xi({\frac{\delta}{\sqrt{\lambda}}})$. If we fix $\delta$ and $\lambda$, the map
\[
P \in V^{-1}(E) \rightarrow q(P,t_\delta(P,\lambda))
\]
is a diffeomorphism $\Phi(P)$ of class $C^1$. Choose $P(r)=\Phi^{-1}(Q(r))$ and keep $\lambda$ fixed. Note that by \eqref{eq:defesp} with $s=a$
\[
\mathrm d\exp^{\perp}(P,\lambda)(a)[P'(0)] = \mathrm d\gamma(P,\lambda)(a)[P'(0)]= \lim_{r\to 0}\frac{\mathrm d}{\mathrm dr}\gamma(P(r),\lambda).
\]
Then, by \eqref{eq:trastemp} and standard argument in the classical theory of ODE's, the values of $d\gamma(P,\lambda)(a)[P'(0)]$ is given by the solution $z$ of the linearized equation of geodesics, evaluated at $s=a$ with initial position $\xi\left({\frac{\delta}{\sqrt{\lambda}}}\right)$ and initial velocity $\frac{D}{ds}\xi\left({\frac{\delta}{\sqrt{\lambda}}}\right)$. Then by the uniqueness of the solutions of the Cauchy problem we have $z(s)=\xi(s)$ for any $s$, so
\[
\mathrm d\exp^{\perp}(P,\lambda)(a)[P'(0)]=\xi(a)=0,
\]
concluding the proof.
\end{proof}
Before proving the converse, we need the following:
\begin{lem}\label{lem:costanza}
Let $\xi$ be an $\mathcal N_{\delta}$-Jacobi field along $\gamma$. Then for all $s \in\left]0,\delta\right]$ we have
$\xi(\sigma_s)\in T_{\gamma(\sigma_s)}{\mathcal N}_s$, and
the vector
\begin{equation}\label{eq:der+sigmaxi}
\left.\tfrac{\mathrm D^*}{\mathrm d\sigma}\right\vert_{\sigma=\sigma_s}\xi(\sigma_s)
+\Sigma_s^*\big(\xi(\sigma_s)\big)
\end{equation}
is parallel to $\dot\gamma(\sigma_s)$, where  $\sigma_s=s/\sqrt{\lambda_\gamma}$.
\end{lem}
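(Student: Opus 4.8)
The plan is to exploit the fact that an $\mathcal N_\delta$-Jacobi field $\xi$ along $\gamma$ is nothing but a solution of the Jacobi equation \eqref{eq:Jacobieq*} with boundary data adapted to the hypersurface $\mathcal N_\delta$, together with the variational characterization of the kernel of the index form $I$ from Remark~\ref{rem:hessconf}. First I would reinterpret $\xi$ as lying in the kernel of the index form $I$ of the conformal action $h$ on the space of $H^{1,2}$-curves from $\mathcal N_\delta$ to the endpoint $\gamma(\delta/\sqrt{\lambda_\gamma})$, so that $\xi$ is a Jacobi field with $\xi(\text{end})=0$ and with the transversality condition \eqref{eq:intparti}: the vector $\varphi(\gamma)\tfrac{\mathrm D}{\mathrm ds}\xi + g(\nabla\varphi(\gamma),\xi)\dot\gamma - \varphi(\gamma)\nabla_\xi\dot\gamma$ is orthogonal to $T_{\gamma(0)}\mathcal N_\delta$ at the initial instant. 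Here $\varphi = \tfrac12(E-V)$, so the first two terms reassemble into $\tfrac{\mathrm D^*}{\mathrm ds}\xi$ via \eqref{eq:Ds*} (up to the scalar $\varphi$), while the last term encodes the shape operator of $\mathcal N_\delta$.

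The first claim, that $\xi(\sigma_s)\in T_{\gamma(\sigma_s)}\mathcal N_s$ for all $s\in\,]0,\delta]$, I would prove exactly as in Remark~\ref{rem:cost}: the quantity $\lambda_\xi = 2\,g_*\big(\tfrac{\mathrm D^*}{\mathrm ds}\xi,\dot\gamma\big)$ is constant on $]0,\delta]$ by the skew-symmetry of $R^*$ and \eqref{eq:Jacobieq*}, hence $g_*(\xi,\gamma)$ is affine in $s$; the transversality condition at $s=\delta/\sqrt{\lambda_\gamma}$ forces $\lambda_\xi=0$ (since $\dot\gamma$ is $g_*$-orthogonal to $T\mathcal N_\delta$ there, and the $\nabla_\xi\dot\gamma$ term is tangent to $\mathcal N_\delta$ because $\xi\in T\mathcal N_\delta$ and $\dot\gamma$ is normal, using that the shape operator maps tangent to tangent), so $g_*(\xi,\gamma)$ is constant; finally it vanishes at the endpoint since $\xi(\delta/\sqrt{\lambda_\gamma})$ is tangent. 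Therefore $g_*(\xi,\gamma)\equiv 0$ and $\xi(\sigma_s)\perp_{g_*}\dot\gamma(\sigma_s)$; since $\dot\gamma$ spans the $g_*$-normal (equivalently $g$-normal) line to $\mathcal N_s$ at $p_s$ — the $\mathcal N_s$ being the $d_V$-level sets through which $\gamma$ passes orthogonally by Remark~\ref{rem:ort} and the discussion preceding Definition~\ref{def:exp} — we conclude $\xi(\sigma_s)\in T_{p_s}\mathcal N_s$.

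For the second claim I would differentiate the identity $g_*(\xi(\sigma),\dot\gamma(\sigma))\equiv 0$ (more precisely, test it against an arbitrary $\eta(\sigma_s)\in T_{p_s}\mathcal N_s$ extended along $\gamma$) and use $\lambda_\xi = 0$, i.e.\ $g_*\big(\tfrac{\mathrm D^*}{\mathrm ds}\xi,\dot\gamma\big)\equiv 0$: this says that $\tfrac{\mathrm D^*}{\mathrm d\sigma}\xi(\sigma_s)$ has no $\dot\gamma$-component, but it may have a tangential component. To identify that tangential component with $-\Sigma_s^*(\xi(\sigma_s))$, I would propagate the transversality condition \eqref{eq:intparti} from $s=\delta/\sqrt{\lambda_\gamma}$ to a general $\sigma_s$ using that it is (up to the explicit scalar $\varphi(\gamma)\tfrac12 g(\dot\gamma,\dot\gamma)$ hidden in $\lambda_\xi$) a conserved quantity along $\gamma$ — alternatively, run the same index-form argument for $\mathcal N_s$ in place of $\mathcal N_\delta$, which is legitimate since $\gamma$ restricted to $[\sigma_s,\delta/\sqrt{\lambda_\gamma}]$ is a $g_*$-geodesic hitting $\mathcal N_s$ orthogonally and $\xi$ restricted there is still in the kernel of the corresponding index form (the transversality condition for $\mathcal N_s$ is implied because $\xi$ is Jacobi and $\lambda_\xi$ vanishes). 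Then by definition of the $g_*$-shape operator, $\nabla^*_\xi\dot\gamma$ has tangential part $\Sigma_s^*(\xi)$ and normal part a multiple of $\dot\gamma$; combining with $\tfrac{\mathrm D^*}{\mathrm d\sigma}\xi$ having vanishing $\dot\gamma$-component, the transversality relation $\tfrac{\mathrm D^*}{\mathrm d\sigma}\xi - \nabla^*_\xi\dot\gamma \perp_{g_*} T\mathcal N_s$ collapses to exactly the statement that $\tfrac{\mathrm D^*}{\mathrm d\sigma}\xi(\sigma_s) + \Sigma_s^*(\xi(\sigma_s))$ is parallel to $\dot\gamma(\sigma_s)$.

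The main obstacle I anticipate is the bookkeeping at $s\to 0$ and the precise translation between the intrinsic conformal formalism of Remark~\ref{rem:hessconf} (with $\varphi = \tfrac12(E-V)$, boundary term at $s=0$) and the shape-operator identities \eqref{eq:shape*}--\eqref{eq:lambdaxi}: one must be careful that $\dot\gamma$, which blows up like $s^{-1/3}$, is still an honest $g_*$-unit-speed direction so that "parallel to $\dot\gamma$" and "$g_*$-normal component" genuinely coincide, and that the $\nabla_\xi\dot\gamma$ term in \eqref{eq:intparti} is finite and decomposes correctly. Keeping track of which terms are $g$-quantities versus $g_*$-quantities, and verifying that the conserved quantity $\lambda_\xi$ really does encode the transversality condition at \emph{every} $s$ rather than just at $s=\delta/\sqrt{\lambda_\gamma}$, is where the real work lies; everything else is linear algebra in $T_{p_s}M = T_{p_s}\mathcal N_s \oplus \mathbb R\,\dot\gamma(\sigma_s)$.
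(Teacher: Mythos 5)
Your treatment of the first claim (tangency of $\xi(\sigma_s)$ to $\mathcal N_s$) is essentially sound and follows the mechanism of Remark~\ref{rem:cost}: $g_*(\xi,\dot\gamma)$ is affine in $\sigma$ and vanishes at the two instants $\sigma_\delta$ and $a$, hence identically. Note, however, that $\lambda_\xi=0$ follows from these two vanishing conditions, not from the transversality condition at $\mathcal N_\delta$ as your parenthetical suggests: transversality only says $\tfrac{\mathrm D^*}{\mathrm d\sigma}\xi+\Sigma^*_\delta(\xi)=c\,\dot\gamma$ for some scalar $c$, which yields $\lambda_\xi=2c\,g_*(\dot\gamma,\dot\gamma)$ with no control on $c$.

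The second claim is where the proposal has a genuine gap. You reduce everything to the assertion that the transversality condition propagates from $\mathcal N_\delta$ to every $\mathcal N_s$, justified by saying it is ``implied because $\xi$ is Jacobi and $\lambda_\xi$ vanishes.'' But $\lambda_\xi=0$ only controls the $\dot\gamma$-component of $\tfrac{\mathrm D^*}{\mathrm d\sigma}\xi$, whereas the content of the lemma is that the \emph{tangential} component of $\tfrac{\mathrm D^*}{\mathrm d\sigma}\xi$ equals $-\Sigma^*_s(\xi)$ at every $s$; this does not follow from the Jacobi equation plus $\lambda_\xi=0$ alone, and your alternative route (running the index-form argument for $\mathcal N_s$) presupposes that $\xi$ lies in the kernel of the index form based at $\mathcal N_s$, which is precisely the transversality condition you are trying to establish --- the argument is circular. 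What is missing is the evolution law of the shape operators $\Sigma^*_s$ of the equidistant hypersurfaces along $\gamma$. The paper supplies exactly this: it constructs a two-parameter variation $z(r,\sigma)=\gamma_r(\sigma)$ by $g_*$-geodesics with $\gamma_r(\sigma_s)\in\mathcal N_s$, takes a $g_*$-parallel field $\nu$ along $z$ with $\nu(0,\sigma_{s_0})=v\perp\dot\gamma$, and shows by commuting $\tfrac{\mathrm D^*}{\mathrm d\sigma}$ and $\tfrac{\mathrm D^*}{\mathrm dr}$ that $\frac{\mathrm d}{\mathrm d\sigma}\,g_*\big(\Sigma^*_s(\xi),\nu\big)=-g_*\big(R^*(\dot\gamma,\xi)\dot\gamma,\nu\big)$, which exactly cancels the curvature term produced by the Jacobi equation in $\frac{\mathrm d}{\mathrm d\sigma}\,g_*\big(\tfrac{\mathrm D^*}{\mathrm d\sigma}\xi,\nu\big)$; constancy of $g_*\big(\tfrac{\mathrm D^*}{\mathrm d\sigma}\xi+\Sigma^*_s(\xi),\nu\big)$, together with its vanishing at $\sigma_\delta$, then gives the claim. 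This Riccati-type cancellation is the substance of the lemma; without it your ``conserved quantity'' step is an assertion of the conclusion rather than a proof of it.
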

\begin{proof}
Let us fix $s_0 \in\left]0,\delta\right]$ such that $\gamma(\sigma_{s_0})\in\mathcal{N}_{s_0}$, and prove that \eqref{eq:der+sigmaxi}
evaluated when $s=s_0$ is parallel to $\dot\gamma(\sigma_{s_0})$. To this aim, choose
 an arbitrary $\bar s\in]0,s_0[$ and let $r\mapsto\gamma_r$ be a $1$-parameter family of $g_*$-geodesics
$\gamma_r:[\sigma_{\bar s},\sigma_{s_0}]\to M$, $r\in\left]-\varepsilon,\varepsilon\right[$, such that for all $s\in[{\bar s},{s_0}]$,

\begin{itemize}
\item $\frac{\mathrm d}{\mathrm dr}\big\vert_{r=0}\gamma_r(\sigma_s)=\xi(\sigma_s)$,
\smallskip
\item $\gamma_r(\sigma_s)\in{\mathcal N}_s$.
\end{itemize}
Now let us fix $v\in\dot\gamma(\sigma_{s_0})^\perp$,  consider the two-parameter map $z(r,\sigma)=\gamma_r(\sigma)$, and let $\nu(r,\sigma)$ be a smooth vector field
along $z$ such that $\nu(0,\sigma_{s_0})=v$ and
\begin{equation}\label{eq:nuparallelo}
\tfrac{\mathrm D^*}{\mathrm d\sigma}\nu(r,\sigma)=0
\end{equation}
for all $r$.
Then using the properties of the family $\gamma_r$ we get
\begin{equation}\label{eq:primaform}
g_*\Big(\Sigma^*_{s_0}\big(\xi(\sigma_{s_0})\big),v\Big)
=g_*\big(\Sigma_{s_0}^*\big(\xi(\sigma_{s_0})\big), \nu(0,\sigma_{s_0})\big)=g_*\big(\tfrac{\mathrm D^*}{\mathrm dr}\big\vert_{r=0}\nu(r,\sigma_{s_0}),\dot\gamma(\sigma_{s_0})\big),
\end{equation}
where last equality is obtained easily using the properties of the shape operator $\Sigma_s^*$.

Now let us prove that the function $\sigma\mapsto g_*\big(\tfrac{\mathrm D^*}{\mathrm d\sigma}\xi(\sigma)+\Sigma_{s}^*\big(\xi(\sigma)\big), \nu(0,\sigma)\big)$ is constant (we drop the subscript $s$ from $\sigma$ and recall that $s$ depends on $\sigma$, i.e. $s=\sigma\sqrt{\lambda_\gamma}$).
From \eqref{eq:Jacobieq*} and \eqref{eq:nuparallelo} we obtain:
\begin{multline}\label{eq:derivataxi}
\frac{\mathrm d}{\mathrm d\sigma}\,g_*\big(\tfrac{\mathrm D^*}{\mathrm d\sigma}\xi(\sigma), \nu(0,\sigma)\big)\\ =g_*\big(\tfrac{(\mathrm D^*)^2}{\mathrm d\sigma^2}\xi(\sigma),\nu(0,\sigma)\big)=g_*\big(R^*\big(\dot\gamma(\sigma),\xi(\sigma)\big)\dot\gamma(\sigma),\nu(0,\sigma)\big).
\end{multline}
Using \eqref{eq:nuparallelo} and \eqref{eq:primaform}, we get:
\begin{multline}\label{eq:derivatasigma}
\frac{\mathrm d}{\mathrm d\sigma}\,g_*\big(\Sigma^*_s\big(\xi(\sigma)\big), \nu(0,\sigma)\big)=g_*\big(\tfrac{\mathrm D^*}{\mathrm d\sigma}\tfrac{\mathrm D^*}{\mathrm dr}\big\vert_{r=0}\nu(r,\sigma),\dot\gamma(\sigma)\big)\\
=g_*\big(\tfrac{\mathrm D^*}{\mathrm dr}\big\vert_{r=0}\tfrac{\mathrm D^*}{\mathrm d\sigma}\nu(r,\sigma),\dot\gamma(\sigma)\big)+g_*\big(R^*\big(\dot\gamma(\sigma),\xi(\sigma)\big)\nu(0,\sigma),\dot\gamma(\sigma)\big)\\=-g_*\big(R^*\big(\dot\gamma(\sigma),\xi(\sigma)\big)\dot\gamma(\sigma),\nu(0,\sigma)\big).
\end{multline}
Finally, using \eqref{eq:derivataxi} and \eqref{eq:derivatasigma} we obtain:
\[\frac{\mathrm d}{\mathrm d\sigma}\,g_*\big(\tfrac{\mathrm D^*}{\mathrm d\sigma}\xi(\sigma)+\Sigma_{s}^*\big(\xi(\sigma)\big), \nu(0,\sigma)\big)=0,\]
i.e., $g_*\big(\tfrac{\mathrm D^*}{\mathrm d\sigma}\xi(\sigma)+\Sigma_{s}^*\big(\xi(\sigma)\big), \nu(0,\sigma)\big)$ is constant for $\sigma\in[\sigma_{\bar s},\sigma_{s_0}]$.
But if we consider $s_0=\delta$, by assumptions of $\xi$ we have
\[
g_*\big(\tfrac{\mathrm D^*}{\mathrm d\sigma}\xi(\sigma)+\Sigma_{s}^*\big(\xi(\sigma)\big), \nu(0,\sigma)\big)\vert_{\sigma=\sigma_{s_0}}=0
\]
and the proof is complete.
\end{proof}

\begin{prop}
Suppose that $\gamma(a)$ is a critical value of the exponential map $\exp^{\perp}(\cdot,\cdot)(a)$. Then $\gamma(a)$ is conjugate to $V^{-1}(E)$.
\end{prop}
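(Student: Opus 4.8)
The plan is to extract from the criticality hypothesis a non‑trivial $E$-Jacobi field vanishing at $s=a$, thereby exhibiting $\gamma(a)$ as a point conjugate to $V^{-1}(E)$; the substance of the argument is the verification of the boundary conditions of Definition~\ref{def:punto-coniugato} at the singular set. Since $\dim\big(V^{-1}(E)\times\mathds R^+\big)=\dim M$, there is a non‑zero $(P',\lambda')\in T_PV^{-1}(E)\times\mathds R$ with $\mathrm d\exp^{\perp}(P,\lambda)(a)\big[(P',\lambda')\big]=0$. Because $q(t,P)$ is independent of $\lambda$, a change of $\lambda$ merely reparametrizes $\gamma$, so $\partial_\lambda\gamma(P,\lambda)(a)$ is a non‑zero multiple of $\dot\gamma(a)$. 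Take a curve $r\mapsto P(r)$ in $V^{-1}(E)$ with $P(0)=P$, $\dot P(0)=P'$, put $\gamma_r:=\gamma(P(r),\lambda)$ and $\xi(s):=\frac{\partial}{\partial r}\big\vert_{r=0}\gamma_r(s)$. Then $\xi$ is a Jacobi field of $g_*$ on $]0,a]$, hence $\xi\in C^2(]0,a])$ and $\xi$ solves \eqref{eq:eqkernel}/\eqref{eq:Jacobieq*} — this is~(c) and the $]0,a]$ part of~(a) — while $\xi(a)=\partial_P\gamma(P,\lambda)(a)[P']=-\lambda'\,\partial_\lambda\gamma(P,\lambda)(a)$ is a multiple of $\dot\gamma(a)$; and since all $\gamma_r$ have the same $\lambda_{\gamma_r}=\lambda$, the constant $\lambda_\xi=2g_*\big(\tfrac{\mathrm D^*}{\mathrm ds}\xi,\dot\gamma\big)$ vanishes, so $g_*(\xi,\gamma)$ is constant on $]0,a]$.

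I would first deduce that $\xi(a)=0$ and $\lambda'=0$. A bootstrap based on the $C^1$ change of variables $q(\tau,P)$, $\tau=\sqrt t$, of Remark~\ref{rem:diffeo} — controlling the singular reparametrization $t=t(s)$ — shows that $\xi$ stays bounded as $s\downarrow0$; together with $E-V(\gamma)\sim s^{2/3}$ and $\dot\gamma\sim s^{-1/3}$ near $s=0$ (Remark~\ref{rem:cost}) this gives $g_*(\xi,\gamma)=\tfrac12\big(E-V(\gamma)\big)g(\xi,\dot\gamma)=O(s^{1/3})\to0$. Being constant, $g_*(\xi,\gamma)\equiv0$; evaluating at $s=a$ and using that $\xi(a)$ is a multiple of $\dot\gamma(a)$ with $g_*(\dot\gamma,\dot\gamma)\neq0$ forces $\xi(a)=0$, whence $\lambda'=0$ and, since $(P',\lambda')\neq0$, $P'\neq0$ so $\xi\not\equiv0$. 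Also $g_*(\xi,\gamma)\equiv0$ gives $\xi(\sigma_s)\in T_{\gamma(\sigma_s)}\mathcal N_s$ for all small $s$, where $\sigma_s=s/\sqrt{\lambda_\gamma}$.

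Next, fix $\delta>0$ small so that $\mathcal N_\delta=d_V^{-1}(\delta)$ is a $C^2$ hypersurface. By \eqref{eq:trastemp} and Remark~\ref{rem:ort}, $\xi$ is, up to a time shift, the variation field of a family of $g_*$-geodesics issuing orthogonally from $\mathcal N_\delta$, and $\xi(\sigma_\delta)\in T_{\gamma(\sigma_\delta)}\mathcal N_\delta$ by the previous step; hence $\xi$ satisfies the natural boundary condition of the index form at $\mathcal N_\delta$ (cf.\ \eqref{eq:intparti}), i.e.\ it is an $\mathcal N_\delta$-Jacobi field, and Lemma~\ref{lem:costanza} applies: for every $s\in\,]0,\delta]$,
\[
\xi(\sigma_s)\in T_{\gamma(\sigma_s)}\mathcal N_s,\qquad
\tfrac{\mathrm D^*}{\mathrm d\sigma}\big\vert_{\sigma_s}\xi(\sigma_s)+\Sigma_s^*\big(\xi(\sigma_s)\big)\ \text{is parallel to}\ \dot\gamma(\sigma_s).
\]
Letting $s\downarrow0$, and using that the hypersurfaces $\mathcal N_s$ converge in $C^1$ to $V^{-1}(E)$ (Remark~\ref{rem:diffeo}): the first relation keeps $\xi$ bounded and makes $\xi(0):=\lim_{s\downarrow0}\xi(s)$ exist and lie in $T_{\gamma(0)}V^{-1}(E)$ — that is~(a) at $s=0$ and~(d); expanding $\tfrac{\mathrm D^*}{\mathrm d\sigma}\xi$ by \eqref{eq:Ds*} and $\Sigma_s^*$ by \eqref{eq:shape*} in the second relation and multiplying by the appropriate power of $E-V(\gamma)$ bounds $\tfrac{\mathrm D}{\mathrm ds}\xi$ by $\mathrm{const}\cdot s^{-1/3}$, so that $\big(E-V(\gamma)\big)g\big(\tfrac{\mathrm D}{\mathrm ds}\xi,\tfrac{\mathrm D}{\mathrm ds}\xi\big)$ is bounded near $s=0$ — that is~(b); and passing that identity to the limit (the shape‑operator and curvature terms being of lower order) identifies the continuous extension at $s=0$ of $\big(E-V(\gamma)\big)\tfrac{\mathrm D}{\mathrm ds}\xi-g\big(\nabla V(\gamma),\xi\big)\dot\gamma$ with a multiple of the limit direction of $\dot\gamma$, that is of $\nabla V(\gamma(0))$ — that is~(e). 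Thus $\xi$ is an $E$-Jacobi field with $\xi(a)=0$ and $\xi\not\equiv0$, and $\gamma(a)$ is conjugate to $V^{-1}(E)$.

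The step I expect to be the main obstacle is the asymptotic analysis at $s=0$: already the boundedness of $\xi$ used to force $\lambda'=0$, and afterwards the continuity of $\xi$ at $s=0$ (condition~(a)), rest on a delicate bootstrap, because the natural variable for regularity near $V^{-1}(E)$ is $\tau=\sqrt t$ rather than the $g_*$-parameter $s$, and one must keep track of the singular reparametrization $t=t(s)$ through the diffeomorphism $q(\tau,P)$ of Remark~\ref{rem:diffeo}; and in the passage $s\downarrow0$ the vanishing of $E-V(\gamma)$ makes $\dot\gamma$, $\tfrac{\mathrm D}{\mathrm ds}\xi$ and the shape operator $\Sigma_s$ unbounded, so the exact blow‑up rates, together with the cancellations encoded in the parallelism relation of Lemma~\ref{lem:costanza}, must be used to see that the clean conditions~(b) and~(e) survive the limit.
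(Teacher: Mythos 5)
Your construction of the Jacobi field is genuinely different from the paper's: you produce $\xi$ directly as the variation field of the family $\gamma(P(r),\lambda)$ attached to a kernel vector $(P',\lambda')$ of $\mathrm d\exp^{\perp}(\cdot,\cdot)(a)$, and you eliminate the $\lambda$-component by showing $g_*(\xi,\dot\gamma)\equiv 0$; the paper instead passes at once to the hypersurface $\mathcal N_{\delta}$ via \eqref{eq:trastemp}, invokes the classical focal-point theory of \cite{docarmo} on $[\sigma_\delta,a]$ to get a Jacobi field with $\xi(a)=0$ satisfying \eqref{eq:campoalbordo}--\eqref{eq:shapebordo}, and then extends it to $\left]0,a\right]$. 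Your route is workable, and in fact can be streamlined: since $\xi(\sigma_\delta)=\mathrm d\Phi[P']\in T_{\gamma(\sigma_\delta)}\mathcal N_{\delta}$ while $\dot\gamma(\sigma_\delta)\perp\mathcal N_{\delta}$, the constant function $g_*(\xi,\dot\gamma)$ vanishes at $\sigma_\delta$, hence identically; this yields $\xi(a)=0$ and $\lambda'=0$ with no information about $\xi$ near $s=0$, so the ``bootstrap'' you invoke to force $\lambda'=0$ is an unnecessary (and unproven) detour. Both proofs then rest on Lemma~\ref{lem:costanza} to propagate the focal condition to every $\mathcal N_s$.

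The genuine gap is in the verification of conditions (a) at $s=0$, (b) and (e) of Definition~\ref{def:punto-coniugato}, which is where the analytic substance of the proposition lies. You assert that a bootstrap ``shows that $\xi$ stays bounded as $s\downarrow 0$'' and that expanding \eqref{eq:der+sigmaxi} via \eqref{eq:Ds*} and \eqref{eq:shape*} ``bounds $\tfrac{\mathrm D}{\mathrm ds}\xi$ by $\mathrm{const}\cdot s^{-1/3}$'', but neither estimate is established, and they are not routine: the expansion of $\tfrac{\mathrm D^*}{\mathrm d\sigma}\xi+\Sigma^*_s(\xi)$ produces the singular coefficient $g(\nabla V(\gamma),\dot\gamma)/\big(2(E-V(\gamma))\big)\sim s^{-1}$, which cancels between the two terms only after $g_*(\xi,\dot\gamma)\equiv0$ is used, and what survives still requires a uniform bound on the $g$-shape operators $\Sigma_s$ of $\mathcal N_s$ as $s\to0$ together with a rate of vanishing of $g(\nabla V(\gamma),\xi)$ — none of which you supply, and which control exactly the term $(E-V(\gamma))\,g\big(\tfrac{\mathrm D}{\mathrm ds}\xi,\dot\gamma\big)^2/g(\dot\gamma,\dot\gamma)$ entering (b). The paper obtains these facts by a different mechanism: the variational argument of \cite[Propositions 4.15 and 4.16]{MT}, which identifies $\xi$ on small intervals with the unique minimizer of the index form $I_s^{s_*}$ in the affine spaces $Y_s^{s_*}$ and thereby yields the uniform bound $\int_s^{s_*}\big(E-V(\gamma)\big)g\big(\tfrac{\mathrm D}{\mathrm ds}\xi,\tfrac{\mathrm D}{\mathrm ds}\xi\big)\,\mathrm ds\le C$; condition (b), the continuity of $\xi$ at $s=0$, condition (d), and — after integrating by parts in \eqref{eq:eqdebole} — the continuity of \eqref{eq:contvecfield} and condition (e) are all consequences of that bound. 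Until you either import that machinery or complete your asymptotic analysis, the proof is incomplete precisely at the step you yourself identify as the main obstacle.
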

\begin{proof}
Let $\gamma(a)$ be a critical value  for the exponential map. Then  by \eqref{eq:trastemp} it is a critical value for the exponential map
defined by geodesics starting orthogonally from ${\mathcal N}_{\delta}$.

Then by classical results (cf \cite{docarmo}, Proposition 4.4, cap.10), there exists a $C^2$ Jacobi field  along $\gamma$ defined
in the interval $[\sigma_{\delta}\equiv\frac{\delta}{\sqrt{\lambda}},a]$ such that
\begin{equation}\label{eq:campoalbordo}
\xi(\sigma_\delta) \in T_{\gamma(\sigma_\delta)}{\mathcal N}_{\delta}, \quad \xi(a)=0
\end{equation}
and
\begin{equation}\label{eq:shapebordo}
\tfrac{\mathrm D}{\mathrm ds}\xi(\sigma_\delta) + \Sigma_{\delta}(\xi(\sigma_\delta)) \text{ is parallel to }\dot \gamma(\sigma_\delta).
\end{equation}
Note that $\xi$ can be extended as Jacobi field along $\gamma$ to the whole interval $\left]0,a\right]$. Moreover,
since $g_*(\xi(s),\dot \gamma(s))$ has second derivative identically zero, and it is null at $\sigma_\delta$ and $a$, we deduce that
\begin{equation}\label{eq:i}
g_*(\xi(s),\dot \gamma(s))=0 \text{ for any }s \in\left]0,a\right].
\end{equation}
Moreover, by Lemma \ref{lem:costanza}:
\begin{equation}\label{eq:ii}
\tfrac{\mathrm D^*}{\mathrm ds}\xi(\sigma) + \Sigma_{\delta}(\xi(\sigma)) \text{ is parallel to }\dot \gamma(\sigma)
 \text{ for any }
\sigma\in\left]0,\sigma_\delta\right].
\end{equation}
Now denote by $I_s^a(\xi,\eta)$ the index form \eqref{eq:index-form} with the interval $[0,1]$ replaced by the interval $[s,a])$ ($s > 0$) and $\phi$ replaced by $E-V$. By \eqref{eq:i} and \eqref{eq:ii} we deduce that
\begin{multline}\label{eq:eqdebole}
I_s^a(\xi,\eta)=0 \text{ for any smooth vector field $\eta$ along $\gamma$ satisfying}\\
\eta(s) \in T_{\gamma(s)}{\mathcal N}_{s\sqrt{\lambda}}, \;\text{and}\; \eta(a)=0.
\end{multline}
Now, arguing as in the proof of \cite[Propositions 4.15]{MT}, we show that there exists $s_{**} < s_* \in\left]0,\sigma_\delta\right]$ such that
for any $s \in\left]0,{s_{**}}\right[$ the quadratic form $I_s^{s_*}(\eta,\eta)$ has a minimizer in the affine space
$Y_s^{s_*}$ of the absolutely continuous vector fields $\eta$ along $\gamma$ such that
$\eta(s) \in T_{\gamma(s)}\mathcal N_{s\sqrt{\lambda}}$ and $\eta(s_*)=\xi(s_*)$. Such a minimizer is a Jacobi field. Moreover, as the proof of  \cite[Proposition 4.16]{MT}, we see also that $I_s^{s_*}$ is
strictly positive definite and there is only one unique Jacobi field along $\gamma$ in $Y_s^{s_*}$: therefore it coincides with the Jacobi field $\xi$.

The same estimate in the proof of  \cite[Proposition 4.15]{MT} gives also the existence of a constant $C$ independent on $s$ such that
\[
\int_s^{s_*}\big((E-V(\gamma)\big)g\big(\tfrac{\mathrm D}{\mathrm ds}\xi,\tfrac{\mathrm D}{\mathrm ds}\xi\big)\,\mathrm ds \leq C,
\]
from which we deduce (d) of Definition \ref{def:punto-coniugato}. Note that property (b) implies the continuity of $\xi$ in a $s=0$, and taking the limit as $s\to0$ in \eqref{eq:i} (using the unit  vector of $\dot \gamma$) gives also property (d).

Finally, integrating by parts in \eqref{eq:eqdebole} (with $s=0$) and using standard regularization methods as in \cite{MT} we obtain the continuity at $s=0$ of the map
\[\big(E-V(\gamma)\big)\tfrac{\mathrm D}{\mathrm ds}\xi - g(\nabla V(\gamma),\xi)\dot \gamma\]
and taking the limit as $s \to 0$ in \eqref{eq:ii} allows to obtain also property (e).
\end{proof}

\section{On the $C^2$--regularity of the distance from the boundary of the potential well}\label{sec:reg}
In this last section we prove the $C^2$--regularity for the Jacobi distance from the boundary of the potential well in a neighborhood of a point $Q_0$ with a unique minimizer and such that $Q_0$ is not conjugate to $V^{-1}(E)$. Indeed we prove the following
\begin{teo}\label{teo:C2reg}
Let $d_V$ be the Jacobi distace from $V^{-1}(E)$, Assume that $Q_0 \in V^{-1}\big(\left]-\infty,E\right[\big)$ is such that there is a unique minimmizer that realizes $d_V(Q_0)$. Assume also that $Q_0$ is not conjugate to $V^{-1}(E)$. Then $d_V$ is of class $C^2$ in a neighborhood of $Q_0$.
\end{teo}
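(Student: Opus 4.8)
\emph{Overview of the plan.} The strategy is to realize, for every $Q$ in a neighborhood of $Q_0$, the minimizer of $d_V(Q)$ as the image of $Q$ under a local inverse of the normal exponential map, and then to substitute the resulting $C^1$-dependence of the minimizer into the gradient formula of \cite{MT} in order to upgrade the regularity of $d_V$ from $C^1$ to $C^2$. Let $\gamma_0=\gamma(P_0,\lambda_0)$ be the unique minimizer realizing $d_V(Q_0)$, reparametrized affinely so that it is defined on $\left]0,a_0\right]$ with $\gamma_0(a_0)=Q_0$; then $d_V(Q_0)=\sqrt{\lambda_0}\,a_0>0$. Since $E$ is a regular value of $V$, the domain $V^{-1}(E)\times\mathds R^{+}$ of the map $\exp^{\perp}(\cdot,\cdot)(a_0)$ has the same dimension as $M$. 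As $Q_0$ is not conjugate to $V^{-1}(E)$, Theorem~\ref{thm:equiv} tells us that $Q_0$ is not a critical value of $\exp^{\perp}(\cdot,\cdot)(a_0)$; in particular the differential of this map at $(P_0,\lambda_0)$ is surjective, hence — the two manifolds being equidimensional — an isomorphism. By Proposition~\ref{prop:expC1} the map $\exp^{\perp}(\cdot,\cdot)(a_0)$ is of class $C^1$, so the inverse function theorem yields neighborhoods $\mathcal U$ of $(P_0,\lambda_0)$ in $V^{-1}(E)\times\mathds R^{+}$ and $\mathcal W$ of $Q_0$ in $M$, and a $C^1$ diffeomorphism $\Psi=(P(\cdot),\lambda(\cdot))\colon\mathcal W\to\mathcal U$ inverting $\exp^{\perp}(\cdot,\cdot)(a_0)$.

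\emph{The local inverse selects the minimizer (the delicate step).} For $Q\in\mathcal W$ the curve $s\mapsto\gamma(P(Q),\lambda(Q))(s)$, $s\in\left]0,a_0\right]$, is a Jacobi geodesic from $V^{-1}(E)$ to $Q$ of $g_*$-length $\sqrt{\lambda(Q)}\,a_0$, so $d_V(Q)\le\sqrt{\lambda(Q)}\,a_0$. I claim that, after possibly shrinking $\mathcal W$, equality holds and this geodesic is the unique minimizer. Suppose not: there are $Q_n\to Q_0$ together with minimizers $\sigma_n$ which, reparametrized affinely on $\left]0,a_0\right]$ so as to reach $Q_n$ at $a_0$, have starting data $(\tilde P_n,\tilde\lambda_n)\notin\mathcal U$; their $g_*$-length gives $\sqrt{\tilde\lambda_n}\,a_0=d_V(Q_n)$. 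By continuity of $d_V$ at $Q_0$ one gets $\tilde\lambda_n\to d_V(Q_0)^2/a_0^2=\lambda_0$, and since $V^{-1}(E)$ is compact we may pass to a subsequence with $\tilde P_n\to\tilde P_\infty$. Continuity of $\exp^{\perp}(\cdot,\cdot)(a_0)$ then gives $Q_n=\gamma(\tilde P_n,\tilde\lambda_n)(a_0)\to\gamma(\tilde P_\infty,\lambda_0)(a_0)$, so $\gamma(\tilde P_\infty,\lambda_0)$ is a Jacobi geodesic from $V^{-1}(E)$ to $Q_0$ of $g_*$-length $\sqrt{\lambda_0}\,a_0=d_V(Q_0)$, i.e.\ a minimizer. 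By the uniqueness hypothesis it coincides with $\gamma_0$, forcing $(\tilde P_\infty,\lambda_0)=(P_0,\lambda_0)$, so $(\tilde P_n,\tilde\lambda_n)\to(P_0,\lambda_0)\in\mathcal U$ — a contradiction. The same argument shows that on $\mathcal W$ \emph{every} minimizer of $d_V(Q)$, reparametrized on $\left]0,a_0\right]$, has starting data in $\mathcal U$; since $\exp^{\perp}(\cdot,\cdot)(a_0)$ is injective on $\mathcal U$, for each $Q\in\mathcal W$ the minimizer is unique and equals $\gamma(P(Q),\lambda(Q))$, so $d_V(Q)=a_0\sqrt{\lambda(Q)}$ on $\mathcal W$. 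This step — promoting uniqueness and identification of the minimizer from the single point $Q_0$ to all nearby points — is the main obstacle, and it is exactly where both hypotheses are used: uniqueness at $Q_0$ drives the compactness argument, while non-conjugacy supplies the local diffeomorphism (hence local injectivity) needed to exclude competing minimizers.

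\emph{Upgrading to $C^2$.} The equality $d_V=a_0\sqrt{\lambda(\cdot)}$ already shows $d_V\in C^1(\mathcal W)$ (with $d_V>0$ there), but it only inherits the $C^1$-regularity of $\Psi$, so a different route is needed for $C^2$. The minimizer of $d_V(Q)$, affinely parametrized on $[0,1]$, is $t\mapsto\gamma(P(Q),\lambda(Q))(a_0t)$, whose terminal velocity is $\dot\gamma_Q(1)=a_0\,\dot\gamma(P(Q),\lambda(Q))(a_0)$. By Remark~\ref{eq:regvel} combined with \eqref{eq:trastemp}, the map $(P,\lambda)\mapsto\dot\gamma(P,\lambda)(a_0)$ is of class $C^1$; since $Q\mapsto(P(Q),\lambda(Q))$ is $C^1$, the map $Q\mapsto\dot\gamma_Q(1)$ is $C^1$ on $\mathcal W$. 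Now apply the gradient formula of \cite{MT}, valid at every point of $\mathcal W$ because the minimizer is unique there:
\begin{equation*}
\nabla^g d_V(Q)=\frac{E-V(Q)}{2\,d_V(Q)}\,\dot\gamma_Q(1).
\end{equation*}
Here $E-V$ is of class $C^2$, $d_V$ is $C^1$ and strictly positive on $\mathcal W$, and $\dot\gamma_Q(1)$ is $C^1$, so $\nabla^g d_V$ is a $C^1$ vector field on $\mathcal W$. Because $g$ is of class $C^3$, the differential $\mathrm d d_V=g(\nabla^g d_V,\cdot)$ is then of class $C^1$, that is $d_V\in C^2(\mathcal W)$, which proves Theorem~\ref{teo:C2reg}. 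Apart from the compactness argument above, the remaining ingredients are routine applications of the inverse function theorem, standard ODE regularity, and the results cited from \cite{MT, GGP1}.
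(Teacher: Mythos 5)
Your proof is correct and follows essentially the same route as the paper: invertibility of $\exp^{\perp}$ near $(P_0,\lambda_0)$ via Theorem~\ref{thm:equiv} and the inverse function theorem, uniqueness of the minimizer for nearby $Q$ by a contradiction argument, and then $C^1$-regularity of $Q\mapsto\dot\gamma_Q(1)$ fed into the gradient formula of \cite{MT}. The only (minor) divergence is in the uniqueness step, where the paper invokes the $H^1$-convergence of minimizers from \cite[Lemma 3.4]{MT} while you give a self-contained compactness argument on the initial data $(\tilde P_n,\tilde\lambda_n)$; both versions reach the same contradiction with the local injectivity of the exponential map.
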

\begin{proof}
By Theorem \ref{thm:equiv}, it follows that $Q_0$ is a regular value of the exponential map. Since it is of class $C^2$ we see that any $Q$ sufficiently close to $Q_0$ is a regular value of the exponential map. Then, always by Theorem \ref{thm:equiv} we have that $Q$ is not conjugate to $V^{-1}(E)$.

Then, for any $Q$ sufficiently close to $Q_0$ there is a unique minimizer. Indeed suppose by contradiction there exists a sequence $Q_n$ of points with at least two minimizers $\gamma_n^1$ and $\gamma_n^2$. By Lemma 3.4 in \cite{MT}, they converge (with respect to the $H^1$--norm to the unique minimizer that realizes $d_V(Q_0)$. But this would be in contradiction with the fact that $Q_0$ is a regular value of the exponential map.

Finally, by Proposition 3.5 of \cite{MT}, for any $Q$ nearby $Q_0$ the gradient of $d_V$ at $Q$ is given by
\[
\nabla d_V(Q)=\frac{(E-V(Q))}{2d_V(Q)}\dot\gamma_Q(1),
\]
where $\gamma_Q$ is the unique minimizer between $V^{-1}(E)$ and $Q$, parameterized in the interval $[0,1]$. Then to prove the $C^2$ regularity of $d_V$ it suffices to prove the $C^1$--regularity of $\dot\gamma_Q(1)$. Since $\gamma_Q(1)=Q$, thanks to the invertibility of $\exp^{\perp}$ and its $C^1$--regularity, formula \eqref{eq:trastemp}
and Remark \ref {eq:regvel} allows to obtain the conclusion of the proof.

\end{proof}

\end{document}